\newcommand{\RED}[1]{{\color{red}#1}}
\renewcommand{\RED}[1]{#1} 
\newtheorem{theorem}{Theorem}
\newtheorem{proposition}[theorem]{Proposition}
\newtheorem{lemma}[theorem]{Lemma}
\theoremstyle{definition}
\newtheorem{definition}[theorem]{Definition}
\newtheorem{example}[theorem]{Example}
\newtheorem{remark}[theorem]{Remark}
\DeclareMathOperator{\Ann}{Ann}
\DeclareMathOperator{\Aut}{Aut}
\DeclareMathOperator{\Mat}{Mat}
\DeclareMathOperator{\Sym}{Sym}
\DeclareMathOperator{\Alt}{Alt}
\DeclareMathOperator{\Hilb}{Hilb}
\DeclareMathOperator{\Pf}{Pf}
\DeclareMathOperator{\Ad}{Ad}
\DeclareMathOperator{\ad}{ad}
\newcommand{\CC}{\mathbb{C}}
\newcommand{\frakgl}{\mathfrak{gl}}
\newcommand{\fraksp}{\mathfrak{sp}}
\newcommand{\fraksl}{\mathfrak{sl}}
\newcommand{\frako}{\mathfrak{o}}
\newcommand{\frakg}{\mathfrak{g}}
\newcommand{\frakp}{\mathfrak{p}}
\newcommand{\fraknp}{\mathfrak{n}^+}
\newcommand{\fraknm}{\mathfrak{n}^-}
\newcommand{\frakk}{\mathfrak{k}}
\newcommand{\frakh}{\mathfrak{h}}
\newcommand{\T}{\,{}^t\!}
\newcommand{\of}{\overline{f}}
\newcommand{\ol}{\overline{\lambda}}
\newcommand{\CDOTS}{
\put(0,0){\circle*{0.5}}%
\put(2.5,0){\circle*{0.5}}%
\put(-2.5,0){\circle*{0.5}}}
\newcommand{\CDOTSLINE}[4]{
\CDOTS%
\Line(5,0)(10,0)%
\Line(-5,0)(-10,0)}
\title[SLP of Gorenstein Algebras Generated by Relative Invariants]%
{The Strong Lefschetz Property of Gorenstein Algebras Generated by Relative Invariants}
\author{Takahiro Nagaoka}
\author{Akihito Wachi}
\address{
{\rm (Akihito Wachi)}
Department of Mathematics,
Hokkaido University of Education,
Kushiro, 085-8580, Japan.}
\keywords{Lefschetz property, Gorenstein algebra, 
  Prehomogeneous vector space, Relative invariant}
\subjclass[2020]{
  13E10, 
  11S90, 
  17B10 
}
\begin{document}
\maketitle

\begin{abstract}
We prove the strong Lefschetz property for 
Artinian Gorenstein algebras generated by the relative invariants
of prehomogeneous vector spaces of commutative parabolic type.
\end{abstract}

\section{Introduction} 
Let $Q = k[x_1,x_2,\ldots,x_n]$
be a polynomial ring over a field $k$
\RED{of characteristic zero},
and $R= k[\partial/\partial x_1, \partial/\partial x_2, \ldots,
  \partial/\partial x_n]$
the ring of partial differential operators with constant coefficients.
Set $\Ann_R(F) = \{ P \in R \mid P(F) = 0 \}$ for $F \in Q$.
Then $R / \Ann_R(F)$ is a graded Artinian Gorenstein algebra.
Conversely, any graded Artinian Gorenstein algebra is constructed this way
(Macaulay's dual annihilator theorem.
See \cite[Theorem~2.71]{MR3112920}).
$R / \Ann_R(F)$ is called a {\em Gorenstein algebra generated by $F$}.

\RED{%
The graded Artinian Gorenstein algebra
$R / \Ann_R(F) = \bigoplus_{i=0}^c A_i$
is a Poincar\'e duality algebra,
which is an algebra such that the map
$A_i \times A_{c-i} \to A_c \; (\simeq k)$ ($(a,b) \mapsto ab$)
forms a perfect paring for any $i = 0, 1, \ldots, \lfloor c/2 \rfloor$.
Therefore, graded Artinian Gorenstein algebras come as cohomology rings
in certain categories.
}%
\RED{%
The following definition is an algebraic abstraction of the hard Lefschetz theorem
for the cohomology rings of compact K\"ahler manifolds.
}%

\begin{definition}[Strong Lefschetz property]
A graded Artinian algebra $A = \bigoplus_{i=0}^c A_i$ 
($A_0 \simeq k$, $A_c \ne 0$)
over $k$
is said to have the 
{\em strong Lefschetz property} 
if there exists $L \in A_1$ such that
$\times L^{c-2i}: A_i \to A_{c-i}$ is bijective
for every $i = 0, 1, \ldots, \lfloor c/2 \rfloor$.
In this case $L$ is called a {\em Lefschetz element}.
\end{definition}

When a graded Artinian algebra $A$ is a complete intersection,
which is a special case of Gorenstein algebras,
there is a long-standing conjecture that
$A$ has the strong Lefschetz property.
In general,
to check whether $A$ has the strong Lefschetz property or not
is difficult,
and therefore
to check for which $F$ the algebra $R / \Ann_R(F)$
has the strong Lefschetz property is also difficult.

There are related studies on this question
\RED{%
in terms of the Hessian of $F$.
By Maeno-Watanabe \cite{MR2594646}
the multiplication map
$\times L^{c-2}: (R/\Ann_R(F))_1 \to (R/\Ann_R(F))_{c-1}$
is bijective for some $L \in R_1$
if and only if the Hessian of $F$ is not identically zero.
Moreover,
$R/\Ann_R(F)$ has the strong Lefschetz property
if and only if every `higher Hessian' of $F$ is not identically zero
\cite[Theorem~3.1]{MR2594646}.

Using this criterion
Maeno-Numata \cite{MR3566530} proved the strong Lefschetz property
when $F$ is the basis generating function of a matroid
if the lattice of flats of the matroid is modular geometric.
They also conjectured the strong Lefschetz property for any matroid.

Nagaoka-Yazawa \cite{MR4138666, MR4234203}
proved the bijectivity of
$\times L^{c-2}: (R/\Ann_R(F))_1 \to (R/\Ann_R(F))_{c-1}$
for some $L \in R_1$
when $F$ is the Kirchhoff polynomial of any simple graph.
In particular,
if $F$ is the Kirchhoff polynomial of a complete graph,
then $F$ is the determinant of a symmetric matrix,
which is
the case of $({\rm C}_n, n)$ in Table~\ref{tbl:comm-parab-type}.
For this reason this paper is motivated by their work.

Murai-Nagaoka-Yazawa \cite{MR4223331}
generalized this result to the case
when $F$ is the basis generating function of any matroid
(the basis generating function of a graphic matroid is
same as the Kirchhoff polynomial of a graph).


From a viewpoint of prehomogeneous vector spaces
the Hessian of a relative invariant is not identically zero
if and only if 
the prehomogeneous vector spaces is regular
(Sato-Kimura \cite[Definition~7]{MR430336}).
}%


As another approach to this question,
Gondim-Russo \cite{MR3282110}, 
Gondim \cite{MR3686979} 
and
Gondim-Zappal\`a \cite{MR3750213,MR3958091} 
study polynomials whose Hessians are identically zero.

In this paper
we give a new family of polynomials $F$ such that $R / \Ann_R(F)$ has the
strong Lefschetz property.
The family consists of the relative invariants of 
regular prehomogeneous vector spaces of commutative parabolic type
(see Definition~\ref{def:pv-parab}).
This family contains
determinants,
determinants of symmetric matrices,
Pfaffians of alternating matrices of even size,
$x_1^2 + x_2^2 + \cdots + x_n^2$
and a polynomial of degree three in 27 variables.
The family also contains powers of the above polynomials.

This paper is organized as follows.
In Section~\ref{sec:pv-parab}
we review the definition of prehomogeneous vector spaces of
commutative parabolic type.
In Section~\ref{sec:main-thm}
we state our main theorem and prove it in Section~\ref{sec:proof}.

\textbf{Acknowledgment.}
The authors thank Masatoshi Kitagawa for a critical comment on the proof
made in a seminar given by the first author,
which is at Waseda University in December, 2019.
The authors also thank Satoshi Murai
for advice on how we write the paper.
This work was supported by JSPS KAKENHI Grant Numbers 22K03347, 20K03508.

\section{Prehomogeneous vector spaces of commutative parabolic type} 
\label{sec:pv-parab}
In this section
we review the definition of prehomogeneous vector spaces
of commutative parabolic type.
Our main theorem shows that
their relative invariants $F$ give
Artinian Gorenstein algebras $R / \Ann_R(F)$ 
that have the strong Lefschetz property.

\begin{definition}[Prehomogeneous vector space]
For simplicity
we define prehomogeneous vector spaces 
over the complex number field.

Let $G$ be a complex Lie group,
and $(G, \pi, V)$ a representation of $G$ on a $\CC$-vector space $V$.
$(G, \pi, V)$ is called a 
{\em prehomogeneous vector space}
if there exists a Zariski open $G$-orbit on $V$.

A polynomial function $F$ on $V$
is called a {\em relative invariant} of $(G, \pi, V)$
if there exists a group character $\chi: G \to \CC^\times$
such that
$F(gv) = \chi(g) F(v)$ ($g \in G$, $v \in V$).
\end{definition}

\RED{%
\begin{definition}[Prehomogeneous vector space of commutative parabolic type]
\label{def:pv-parab}
Let $\frakg$ be a complex simple Lie algebra,
$\frakp$ its parabolic subalgebra,
and $\fraknp$ the nilpotent radical of $\frakp$.
Let $\frakk$ be a Levi subalgebra of $\frakp$,
which is, by definition, 
a subalgebra that is a complement to $\fraknp$ in $\frakp$,
and $K$ the complex Lie subgroup of $G$
corresponding to $\frakk$.
Let $\frakh$ be a Cartan subalgebra of $\frakk$,
which is, by definition, a maximal commutative subalgebra of $\frakk$.
$\frakh$ is also a Cartan subalgebra of $\frakg$.

Then it is known that
$(K, \Ad, \fraknp / [\fraknp,\fraknp])$ is a prehomogeneous vector space
\cite{MR0506488},
where $\Ad: K \to \Aut(\fraknp)$ is the adjoint action.
This is called a
{\em prehomogeneous vector space of parabolic type}.
In particular,
when $\frakp$ is a maximal parabolic subalgebra,
and therefore, $\fraknp$ is a commutative subalgebra,
$(K, \Ad, \fraknp)$ 
is called a {\em prehomogeneous vector space of commutative parabolic type}
(see also \cite[Corollaire~4.1.11]{MR2412335}).
\end{definition}
}

Prehomogeneous vector spaces of commutative parabolic type 
are classified as in Table~\ref{tbl:comm-parab-type}.
Type in Table~\ref{tbl:comm-parab-type} means the pair
of the type of the Lie algebra $\frakg$
and the index of the simple root that characterizes
the maximal parabolic subalgebra $\frakp$.

\begin{table}
\includegraphics{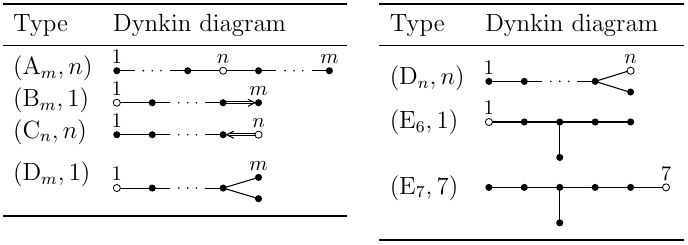}%
\RED{%
\begin{quote}\small
Dynkin diagrams are used
for the classification of complex simple Lie algebras,
and there are types from A to G.
For example, the complex simple Lie algebra of type A$_m$ is $\fraksl_{m+1}$,
and it is the semisimple part of $\frakg$.
White nodes in the above Dynkin diagrams
specify maximal parabolic subalgebra $\frakp$.
The semisimple part of the Levi subalgebra $\frakk$ of $\frakp$
corresponds to the Dynkin diagram obtained by removing the white node 
and its adjacent edges from the Dynkin diagram of $\frakg$.
\end{quote}
}%
\bigskip
\caption{Prehomogeneous vector spaces of commutative parabolic type}
\label{tbl:comm-parab-type}
\end{table}

Among them
relative invariants exist only for types
$({\rm A}_{2n-1}, n)$,
$({\rm B}_m, 1)$,
$({\rm C}_n, n)$,
$({\rm D}_m, 1)$,
$({\rm D}_{2m}, 2m)$ and
{$({\rm E}_7, 7)$,
which are called
{\em regular} 
prehomogeneous vector spaces of commutative parabolic type .
In each type
there exists an irreducible relative invariant,
which is called a {\em basic relative invariant}.
The other relative invariants are powers of 
the basic relative invariant.
We list below the basic relative invariant for each type.
In the list,
Lie algebras, Lie groups and spaces of matrices are defined over $\CC$.
\medskip

\paragraph{\boldmath $({\rm A}_{2n-1}, n)$}
$\frakg = \frakgl_{2n}$,
$K \simeq GL_n \times GL_n$ and
$\fraknp \simeq \Mat_n$.
The action of $K$ on $\fraknp$ is given by
\begin{align*}
(g_1, g_2).X = g_1 X g_2^{-1}
\qquad
((g_1, g_2) \in GL_n \times GL_n, \; X \in \Mat_n),
\end{align*}
and the basic relative invariant is $f(X) = \det X$.

\paragraph{\boldmath $({\rm B}_m, 1)$ and $({\rm D}_m, 1)$}
For type $({\rm B}_m, 1)$,
$\frakg$ is the complex orthogonal Lie algebra $\frako_{2m+1}$,
$K \simeq GL_1 \times O_{2m}$ and
$\fraknp \simeq \CC^{2m}$.
For type $({\rm D}_m, 1)$,
$\frakg = \frako_{2m}$,
$K \simeq GL_1 \times O_{2m-1}$ and
$\fraknp \simeq \CC^{2m-1}$.
Set $n = 2m$ or $2m-1$ as the dimension of $\fraknp$,
then the action of $K$ on $\fraknp$ is given by
\begin{align*}
(a, g).v = a g v
\qquad
(
(a,g) \in GL_1 \times O_n, \; v \in \CC^n
),
\end{align*}
and the basic relative invariant is 
$f = x_1^2 + x_2^2 + \cdots + x_n^2$,
where $x_1, \ldots ,x_n$ are linear coordinate functions on $\CC^n$.

\paragraph{\boldmath $({\rm C}_n, n)$}
$\frakg$ is the complex symplectic Lie algebra $\fraksp_{2n}$
of size $2n$,
$K \simeq GL_n$ and
$\fraknp \simeq \Sym_n$, 
which is the space of symmetric matrices of size $n$.
The action of $K$ on $\fraknp$ is given by
\begin{align*}
g.X = gX \T g
\qquad
(g \in GL_n, \; X \in \Sym_n),
\end{align*}
and the basic relative invariant is $f(X) = \det X$.

\paragraph{\boldmath $({\rm D}_{2m}, 2m)$}
$\frakg = \frako_{4m}$,
$K \simeq GL_{2m}$ and
$\fraknp \simeq \Alt_{2m}$, 
which is the space of alternating matrices of size $2m$.
The action of $K$ on $\fraknp$ is given by
\begin{align*}
g.X = gX \T g
\qquad
(g \in GL_{2m}, \; X \in \Alt_{2m}),
\end{align*}
and the basic relative invariant is the Pfaffian $f(X) = \Pf X$.

\paragraph{\boldmath $({\rm E}_7, 7)$}
In this type
$K$ is isomorphic to the product of the group of type E$_6$
and the group $GL_1$.
The basic relative invariant is a homogeneous polynomial of 
degree three
on 27-dimensional vector space $\fraknp$.
We omit the details
\RED{%
(see \cite[Example~39]{MR430336})%
}%
.
\medskip

If $(K, \Ad, \fraknp)$ is a regular prehomogeneous vector space
of \RED{commutative} parabolic type,
then its contragredient representation
$(K, \Ad, \fraknm)$ with respect to the Killing form
is also a regular prehomogeneous vector space
of \RED{commutative} parabolic type,
and there is a basic relative invariant in $\CC[\fraknm]$,
where $\fraknm \; ( \subset \frakg)$ is the opposite Lie algebra of $\fraknp$.
Take type $({\rm C}_n, n)$ for example,
the action of $(K, \Ad, \fraknm)$
is 
$g.X = \T g^{-1} X g^{-1}$
($g \in GL_n$, $X \in \Sym_n$),
and the basic relative invariant
$\of \in \CC[\fraknm]$ is $\of(X) = \det X$.

\RED{%
In our main theorem (Theorem~\ref{thm:main}),
we deal with regular prehomogeneous vector spaces of commutative parabolic type
described above,
and take the basic relative invariant $\of \in Q = \CC[\fraknm]$.
The ring $R$ of differential operators with constant coefficients,
which acts on $Q$ by differentiation,
is identified with $\CC[\fraknp]$ via the Killing form.
We prove the strong Lefschetz property of $R / \Ann_R(\of^s)$
for any positive integer $s$.
See the next section for details.
}

\section{Main theorem} 
\label{sec:main-thm}
Set $Q = \CC[\fraknm]$.
For $f \in \CC[\fraknp]$
we define a constant\RED{-}coefficient differential operator $f(\partial)$
on $\fraknm$ by
\begin{align} \label{eq:diff-op}
f(\partial) \exp (x, y) = f(x) \exp (x,y)
\qquad
(x \in \fraknp, \; y \in \fraknm),
\end{align}
where $(x,y)$ denotes the Killing form on $\frakg$.
By this identification
we regard $\CC[\fraknp]$ as the ring of partial differential operators
with constant coefficients on $\fraknm$,
and set $R = \CC[\fraknp]$.
The homogeneous component $R_1$ of degree one
can be identified with $\fraknm$.

We have our main theorem,
which is proved in Section~\ref{sec:proof}.

\begin{theorem} \label{thm:main}
Let $\of \in \CC[\fraknm]$ be the basic relative invariant
(see Section~\ref{sec:pv-parab})
of $(K, \Ad, \fraknm)$
which is a regular prehomogeneous vector space of commutative parabolic type.
Set $R = \CC[\fraknp]$,
and $F = \of^s$ for a positive integer $s$.
Then the Artinian Gorenstein algebra $R / \Ann_R(F)$
generated by $F$ has the strong Lefschetz property.

Moreover,
$L \in R_1 \simeq \fraknm$ is a Lefschetz element
if and only if $L$ is in the open $K$-orbit of
the prehomogeneous vector space $(K, \Ad, \fraknm)$
independent of $s$.
\end{theorem}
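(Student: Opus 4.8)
The plan is to manufacture a single $\fraksl_2$-triple of differential operators on $Q=\CC[\fraknm]$ whose lowering operator is multiplication by the candidate Lefschetz element, and then to read off the strong Lefschetz property from finite-dimensional $\fraksl_2$-representation theory. The starting point is the grading $\frakg=\fraknm\oplus\frakk\oplus\fraknp$ attached to the commutative parabolic $\frakp=\frakk\oplus\fraknp$, together with the classical realization of $\frakg$ as polynomial vector fields on $\fraknm$: the abelian piece $\fraknm$ acts by constant vector fields (which, under the identification $R_1\simeq\fraknm$, are precisely the first-order operators spanning $R_1$), the Levi $\frakk$ acts by linear vector fields twisted by a scalar character, and $\fraknp$ acts by quadratic vector fields. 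Write $\rho\colon\frakg\to\mathcal D(\fraknm)$ for this action. First I would fix $L=f$ in the open $K$-orbit of $(K,\Ad,\fraknm)$; since the open orbit is a single $K$-orbit, after conjugating by $K$ one may complete $f$ to an $\fraksl_2$-triple $(e,h,f)$ with $e\in\fraknp$ and $h\in\frakk$ equal to the doubled grading element, so $\rho(f)$ is the lowering operator.

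The heart of the argument is the identification of $A=R/\Ann_R(F)$, realized as $R\cdot F\subseteq Q$, with a finite-dimensional irreducible $\frakg$-module. Under $A_i\simeq R_i\cdot F\subseteq Q_{c-i}$ multiplication by $L=f$ becomes the constant vector field $\rho(f)$, so $A$ is automatically $\rho(\fraknm)$-stable, and relative invariance gives $\rho(\frakk)\of^s\subseteq\CC\of^s$, so $A$ is $\rho(\frakk)$-stable as well. The decisive point is the \emph{key lemma}
\[
  \rho(\fraknp)\,\of=0,
\]
i.e.\ $\of$ is a highest weight vector for $\rho$. Granting this, each $\rho(e)$ is a derivation of $\CC[\fraknm]$, so $\rho(\fraknp)\of^{s}=s\of^{s-1}\rho(\fraknp)\of=0$ for every $s$, and a short induction on $i$ using $[\rho(\fraknp),\rho(\fraknm)]=\rho([\fraknp,\fraknm])\subseteq\rho(\frakk)$ shows $A$ is stable under $\rho(e)$ too. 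Consequently $A=U(\fraknm)\,\of^{s}=U(\frakg)\,\of^{s}$ is the finite-dimensional irreducible $\frakg$-module with highest weight vector $\of^{s}$, and $A_i$ is exactly the $H_0$-eigenspace of eigenvalue $c-i$.

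With this structure the Lefschetz property follows from $\fraksl_2$-theory for $(e,h,f)$. A direct computation of the scalar shift in $\rho(\frakk)$ shows it is forced, by finite-dimensionality of $A$, to make the $h$-weight on $A_i$ equal to $c-2i$, so the weights of $A$ are symmetric about $0$. Thus for each $i\le c/2$ the map $\rho(f)^{\,c-2i}$ carries the weight space $A_i$ isomorphically onto $A_{c-i}$, which is exactly bijectivity of $\times L^{c-2i}\colon A_i\to A_{c-i}$; hence every open-orbit element is a Lefschetz element, uniformly in $s$. For the converse, if $L\in\fraknm$ lies outside the open orbit then $\of(L)=0$, and on $A_0=\CC F$ the top power acts by $L^{c}F=\partial_L^{\,c}\of^{s}=c!\,\of(L)^{s}=0$; since $A_c\neq0$ this breaks bijectivity at $i=0$, so $L$ is not a Lefschetz element. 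This gives the stated ``if and only if''.

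The main obstacle is the key lemma $\rho(\fraknp)\of=0$, equivalently that the basic relative invariant is the highest weight vector of a finite-dimensional $\frakg$-constituent of $\CC[\fraknm]$. This cannot be deduced from $K$-relative invariance alone, because $\rho(\fraknp)$ acts by the \emph{nonlinear} quadratic vector fields coming from the $G$-action on the big cell of $G/P$, so $\rho(\fraknp)\of=0$ is the genuine statement of invariance under this nonlinear $N^{+}$-action. I would prove it by a weight--degree argument: $\rho(\fraknp)\of$ lies in $\CC[\fraknm]_{d+1}$ with $d=\deg\of$, and, being produced from the extremal $\frakk$-semiinvariant $\of$ by raising, would have to be a $\frakk$-semiinvariant of a weight not occurring in that graded component, forcing it to vanish. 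Establishing this non-occurrence — either uniformly through the highest-weight theory of the representation $V(\lambda)$ attached to each type, or case-by-case via the explicit table — is the technical core, after which all powers $s$ and the full Lefschetz conclusion follow formally.
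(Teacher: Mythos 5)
Your overall architecture is the same as the paper's, seen in the dual picture: the paper realizes $A=R/\Ann_R(F)$ as the quotient of the generalized Verma module $M(s\varpi_{i_0})\simeq\CC[\fraknp]$ by its maximal submodule (identified with $\Ann_R(F)$ via $b$-function identities), whereas you realize $A$ as the finite-dimensional submodule $R\cdot F\subseteq\CC[\fraknm]$ for a twisted big-cell action; both then conclude by $\fraksl_2$-theory for a triple whose lowering operator is $\times L$ with $L$ in the open orbit. Your converse direction (if $\of(L)=0$ then $L^{c}F=\partial_L^{\,c}\of^{s}=c!\,\of(L)^{s}=0$, so $\times L^{c}:A_0\to A_c$ is not bijective) is clean, correct, and arguably simpler than the paper's appeal to the classification of orbit representatives.

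The gap is in your key lemma, which you rightly flag as the technical core but state and attack incorrectly. If $\rho(\fraknp)$ consists of the honest quadratic vector fields of the big-cell action (derivations, no twist), then $\rho(\fraknp)\of\neq 0$: already for $\frakg=\fraksl_2$ one has $y^2\partial_y(y)=y^2$, and for type $({\rm C}_n,n)$ the field $\xi_B=-\sum_{i,j}(YBY)_{ij}\,\partial/\partial y_{ij}$ gives $\xi_B(\det Y)=-\det(Y)\,\mathrm{tr}(BY)$. The basic relative invariant is not invariant under the nonlinear $N^{+}$-action on the big cell; it is only relatively invariant with a nontrivial multiplier. To make $\fraknp$ annihilate $\of^{s}$ you must twist by the character $s\varpi_{i_0}$, so that $\rho_s(B)=\xi_B+s\,\ell_B$ with $\ell_B$ a linear form. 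But then (i) $\rho_s(B)$ is no longer a derivation, so your Leibniz step $\rho(\fraknp)\of^{s}=s\of^{s-1}\rho(\fraknp)\of$ is unavailable, and (ii) the twist depends on $s$ (with parameter $1$ one gets $\rho_1(B)\det^{s}=(1-s)\det^{s}\mathrm{tr}(BY)\neq 0$ for $s\ge 2$), so the case $s=1$ does not formally imply the general case. Your proposed weight--degree argument also cannot close this: the weight of $\rho(B)\of$ does occur in degree $d+1$ (it is the weight of $\of\cdot\ell_B$), so that argument shows at best $\rho(B)\of\in\CC\,\of\ell_B$ and cannot detect whether the scalar vanishes --- which is exactly the difference between the right and the wrong twist. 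The identity you actually need is $\xi_B(\of)=-\ell_B\,\of$ (infinitesimal relative invariance under all of $G$, i.e.\ that $\of$ extends to a section of the line bundle of weight $d\varpi_{i_0}$ on $G/P$), from which $\rho_s(\fraknp)\of^{s}=0$ does follow by applying Leibniz to the vector-field part alone; this is precisely the point where the paper instead invokes the $b$-function formula and the description of the maximal submodule of $M(\mu)$, and it must be proved, not inferred from weights.
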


\begin{example}[Type (C$_n,n$)] \label{ex:main}
We see an example of type $({\rm C_n}, n)$ in Table~\ref{tbl:comm-parab-type}.
Set
\begin{align*}
\frakg &= \fraksp_{2n} =
\left\{
\begin{pmatrix} A&B \\ C&-\T A \end{pmatrix}
\;\Big|\; A \in \frakgl_{n}, \; B, C \in \Sym_n
\right\},
\\
\frakp &= 
\left\{
\begin{pmatrix} A&B \\ 0&-\T A \end{pmatrix}
\;\Big|\; A \in \frakgl_{n}, \; B \in \Sym_n
\right\},
\\
\frakk &= 
\left\{
\begin{pmatrix} A&0 \\ 0&-\T A \end{pmatrix}
\;\Big|\; A \in \frakgl_{n}, 
\right\}
\simeq \frakgl_n,
\\
\fraknp &= 
\left\{
\begin{pmatrix} 0&B \\ 0& 0 \end{pmatrix}
\;\Big|\; B \in \Sym_n
\right\}
\simeq \Sym_n,
\\
\fraknm &= 
\left\{
\begin{pmatrix} 0&0 \\ C& 0 \end{pmatrix}
\;\Big|\; C \in \Sym_n
\right\}
\simeq \Sym_n,
\end{align*}
then
$\frakp$ is a maximal parabolic subalgebra of $\frakg$,
$\frakk$ is a Levi subalgebra of $\frakp$,
and $\fraknp$ is the nilpotent radical of $\frakp$.
The complex Lie group corresponding to $\frakk$ is
\begin{align*}
K &= 
\left\{
\begin{pmatrix} g&0 \\ 0&\T g^{-1} \end{pmatrix}
\;\Big|\; g \in GL_{n}, 
\right\}
\simeq GL_n.
\end{align*}

The adjoint actions of $K$ on $\fraknp$ and $\fraknm$ are given by 
\begin{align*}
\begin{pmatrix} g&0 \\ 0&\T g^{-1} \end{pmatrix}
\begin{pmatrix} 0&B \\ 0& 0 \end{pmatrix}
\begin{pmatrix} g&0 \\ 0&\T g^{-1} \end{pmatrix}^{-1}
&=
\begin{pmatrix} 0 & g B \T g \\ 0 & 0 \end{pmatrix},
\\
\begin{pmatrix} g&0 \\ 0&\T g^{-1} \end{pmatrix}
\begin{pmatrix} 0&0 \\ C& 0 \end{pmatrix}
\begin{pmatrix} g&0 \\ 0&\T g^{-1} \end{pmatrix}^{-1}
&=
\begin{pmatrix} 0&0 \\ \T g^{-1} C g^{-1} & 0 \end{pmatrix}.
\end{align*}
Thus the action of the prehomogeneous vector space
$(K, \Ad, \fraknp)$ 
(resp. $(K, \Ad, \fraknm)$)
is written simply as
$g.B = g B \T g$ 
(resp. $g.C = \T g^{-1} C g^{-1}$)
($g \in GL_n$, $B, C \in \Sym_n$)
as already seen in Section~\ref{sec:pv-parab}.

Since $\of = \det C \in \CC[\fraknm]$
is just multiplied by $\det g^{-2}$ under the action $g.C = \T g^{-1} C g^{-1}$,
$\of$ is a relative invariant.
Moreover, $\of$ is the basic invariant, since it is an irreducible polynomial.

Set $R = \CC[\fraknp]$.
Then Theorem~\ref{thm:main} says that
$R / \Ann_R(\of^s)$ has the strong Lefschetz property
for any positive integer $s$.
A Lefschetz element $L \in \fraknm$ is any symmetric matrix
of rank $n$,
since the open orbit of $(K, \Ad, \fraknm)$
is equal to the set of the matrices of full rank.
\end{example}

\begin{remark}[The set of Lefschetz elements]
Although Theorem~\ref{thm:main}
gives the set of Lefschetz elements completely,
to determine it is very difficult in general,
and there are only a few such examples.

The simplest example of such graded Artinian algebras
that has the strong Lefschetz property is a monomial complete
intersection
$\CC[x_1, x_2, \ldots, x_n] / 
\langle x_1^{a_1}, x_2^{a_2}, \ldots, x_n^{a_n} \rangle$.
Another known example is the case of coinvariant rings of Weyl groups and real
reflection groups
(Maeno-Numata-Wachi \cite{MR2817446}).

In these two known cases the complement of the set of Lefschetz elements is
a union of hyperplanes, but in our main theorem the complement is a
union of hypersurfaces.
\end{remark}

\section{Proof of the main theorem} 
\label{sec:proof}
In the rest of this paper we prove Theorem~\ref{thm:main}.
We do not use Hessians \RED{(see Introduction for Hessians)},
but use the theory of generalized Verma modules of Lie algebras.
In this section we use the notation of Definition~\ref{def:pv-parab},
and suppose that $(K, \Ad, \fraknp)$ is regular.

\subsection{$\ad(\frakk)$-module structure of $\CC[\fraknp]$} 
\label{subsec:str-CC[fraknp]}
\begin{definition}[strongly orthogonal roots]
Let $\Delta$ be the root system of $(\frakg, \frakh)$.
Two roots $\alpha, \beta \in \Delta$ are said to be 
{\em strongly orthogonal}
if $\alpha$ is not proportional to $\beta$,
and neither $\alpha+\beta$ nor $\alpha-\beta$ belongs to $\Delta$.

If $\alpha$ and $\beta$ are strongly orthogonal,
then $\alpha$ is orthogonal to $\beta$,
since
$(\alpha, \beta) < 0$ implies $\alpha - \beta \in \Delta$.
\end{definition}

Let $\alpha_{i_0}$ be the simple root
that characterizes the maximal parabolic subalgebra $\frakp$.
Namely,
$i_0$ is the index of the white circle in Table~\ref{tbl:comm-parab-type}.
Let $\Delta_N^+$ be the set of roots corresponding to $\fraknp$.
We take a sequence $\gamma_1, \gamma_2, \ldots, \gamma_r$
of mutually strongly orthogonal roots in $\Delta_N^+$ as follows.
Set $\gamma_1 = \alpha_{i_0}$.
When we have defined $\gamma_1,\ldots,\gamma_i$,
let $\gamma_{i+1} \in \Delta_N^+$ be the lowest root 
that is strongly orthogonal to all $\gamma_1, \ldots, \gamma_i$
if there exists such a root.

Set 
$\lambda_i = -(\gamma_1 + \gamma_2 + \cdots + \gamma_i)$
($i=1,2,\ldots,r$).
$\lambda_i$ is an integral weight of $\frakg$,
and it can be also considered as that of $\frakk$,
since we can take a common Cartan subalgebra $\frakh$
for $\frakg$ and $\frakk$.
Then we have the structure theorem of $\CC[\fraknp]$
as follows.

\begin{lemma}[Decomposition of {$\CC[\fraknp]$} as an $\ad(\frakk)$-module,
  Schmid {\cite{MR259164}}]
\label{lem:schmid}
$(\frakk, \ad, \CC[\fraknp])$ decomposes 
multiplicity-freely into simple modules as follows.
\begin{align*}
\CC[\fraknp] 
= \bigoplus_{k_1,\ldots,k_r \ge 0} V_{k_1 \lambda_1 + \cdots + k_r \lambda_r},
\end{align*}
where $V_\lambda$ denotes the finite-dimensional simple $\ad(\frakk)$-module
of highest weight $\lambda$.
\qed
\end{lemma}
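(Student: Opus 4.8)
The plan is to prove Schmid's decomposition in three stages: reduce $\CC[\fraknp]$ to a symmetric algebra, establish that this symmetric algebra is multiplicity-free as an $\ad(\frakk)$-module, and then pin down the monoid of highest weights by exhibiting explicit highest weight vectors indexed by the strongly orthogonal roots. First I would use the Killing form to identify $(\fraknp)^* \simeq \fraknm$ as $\ad(\frakk)$-modules, so that $\CC[\fraknp] = \Sym((\fraknp)^*) \simeq \Sym(\fraknm)$ with its grading by degree. Because $\frakp$ is a maximal parabolic with commutative (abelian) nilradical, we have $\frakg = \fraknm \oplus \frakk \oplus \fraknp$ with $[\fraknp,\fraknp] = [\fraknm,\fraknm] = 0$, and the semisimple element of $\frakh$ defining this grading gives an involution exhibiting $(\frakg, \frakk)$ as a symmetric pair of Hermitian type; here $\fraknm$ is the (irreducible) isotropy representation, an $\ad(\frakk)$-simple module of highest weight $\lambda_1 = -\gamma_1 = -\alpha_{i_0}$.

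The next step is multiplicity-freeness of $\Sym(\fraknm)$ as an $\ad(\frakk)$-module. I would prove this by showing that the action of $K$ on $\fraknm$ is spherical, that is, that a Borel subgroup $B \subset K$ has a Zariski-dense orbit on $\fraknm$; the Vinberg--Kimelfeld criterion then yields that the coordinate ring is multiplicity-free. Sphericity can be read off from the symmetric-pair structure just described (the isotropy representation of a symmetric space is spherical), or verified directly by producing a dense $B$-orbit, using that $(K, \Ad, \fraknm)$ is already prehomogeneous under all of $K$.

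To determine the highest weights, let $\mathfrak{u}$ be the nilradical of a Borel subalgebra of $\frakk$ containing $\frakh$, and $U^+ = \exp \mathfrak{u}$. By multiplicity-freeness the algebra $\Sym(\fraknm)^{U^+}$ of highest weight vectors is a polynomial ring, and the decomposition of $\CC[\fraknp]$ is governed entirely by its weight monoid. I would then construct algebraically independent highest weight vectors $f_1, \ldots, f_r$ with $\deg f_i = i$ and $\ad(\frakh)$-weight
\[
\lambda_i = -(\gamma_1 + \gamma_2 + \cdots + \gamma_i),
\]
assembled as "generalized principal minors" from the strongly orthogonal root vectors $e_{-\gamma_1}, \ldots, e_{-\gamma_r} \in \fraknm$. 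Strong orthogonality of the $\gamma_j$ is exactly what guarantees that these vectors span a subspace on which the unipotent action is controlled, so that the relevant products are $U^+$-fixed and carry precisely the weights $\lambda_i$. Once this is done, $\Sym(\fraknm)^{U^+} = \CC[f_1, \ldots, f_r]$, the highest weights are exactly the $k_1\lambda_1 + \cdots + k_r\lambda_r$ with $k_i \ge 0$, each with a one-dimensional highest weight space, and combining this with multiplicity-freeness gives the stated decomposition.

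I expect the construction in the third stage to be the main obstacle: matching the abstract weight monoid of $\Sym(\fraknm)^{U^+}$ to the explicit weights $\lambda_i$ defined through the strongly orthogonal roots requires the Harish-Chandra strongly orthogonal root theory together with a careful verification that the $f_i$ are genuinely $U^+$-invariant and of weight exactly $\lambda_i$. The sphericity step is comparatively standard but still needs either an appeal to the symmetric-space structure or an explicit dense Borel orbit.
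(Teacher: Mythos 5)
The paper gives no proof of this lemma: it is quoted from Schmid \cite{MR259164} (the Hua--Kostant--Schmid decomposition) and stated with the proof omitted, so there is no internal argument to compare yours against. Your outline is the standard route to that theorem --- identify $\CC[\fraknp]$ with $\Sym(\fraknm)$, obtain multiplicity-freeness from sphericity of the $K$-action via the Vinberg--Kimelfeld criterion, and determine the weight monoid by exhibiting the highest weight vectors $f_1,\ldots,f_r$ attached to the Harish-Chandra strongly orthogonal roots. As a strategy this is sound and is essentially how the result is proved in the literature.

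Two caveats. First, the parenthetical justification ``the isotropy representation of a symmetric space is spherical'' is false in general: for the symmetric pair $(\fraksl_3,\frako_3)$ the isotropy module is the five-dimensional irreducible $\fraksl_2$-module, which cannot be spherical since a Borel subgroup of $SL_2$ has dimension $2<5$. What you need, and what is true, is the special fact that for a \emph{Hermitian} pair each abelian piece $\fraknp$, $\fraknm$ is separately a spherical $\frakk$-module; this still has to be argued, e.g.\ by exhibiting the dense Borel orbit through $Y_{\gamma_1}+\cdots+Y_{\gamma_r}$. Second, the step you yourself flag as the main obstacle carries most of the content of the theorem: one must show not only that the $f_1^{k_1}\cdots f_r^{k_r}$ are $U^+$-invariant of weight $k_1\lambda_1+\cdots+k_r\lambda_r$, but that there are \emph{no other} highest weight vectors, i.e.\ that $\Sym(\fraknm)^{U^+}=\CC[f_1,\ldots,f_r]$ exactly; note also that ``the algebra of $U^+$-invariants is a polynomial ring'' follows from freeness of the weight monoid rather than from multiplicity-freeness alone, so as written that sentence is slightly circular. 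None of this is carried out in the proposal, so what you have is a correct architecture with its decisive steps deferred --- acceptable here only because the paper itself treats the statement as a citation rather than proving it.
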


It is known that
there exist homogeneous polynomials $f_i$ of degree $i$ ($1 \le i \le r$),
and
$V_{k_1 \lambda_1 + \cdots + k_r \lambda_r}$
contains
$f_1^{k_1} f_2^{k_2} \cdots f_r^{k_r}$ ($k_i \ge 0$)
as a highest weight vector. 

Moreover,
$f_r$ in a one-dimensional vector space $V_{\lambda_r}$
is the basic relative invariant
of the regular prehomogeneous vector space $(K, \Ad, \fraknp)$
\cite[Lemma~6.4]{MR1679583}.
Other relative invariants 
$f_r^{k_r}$ ($k_r \ge 1$) are highest weight vectors of 
one-dimensional vector spaces $V_{k_r \lambda_r}$.

Thus $V_{k_1 \lambda_1 + \cdots + k_r \lambda_r}$
consists of homogeneous polynomial of degree
$k_1 + 2k_2 + \cdots + rk_r$.

\begin{example}[Type (C$_n,n$)] \label{ex:main2}
See Example~\ref{ex:main} for the notation.
For Type (C$_n,n$), $r$ is equal to $n$.
Strongly orthogonal roots in $\Delta_N^+$ are
$\gamma_1 = (0,\ldots,0,2)$,
$\gamma_2 = (0,\ldots,0,2,0), \ldots$,
$\gamma_n = (2,0,\ldots,0)$,
and integral weights $\lambda_i$'s are
$\lambda_1 = (0,\ldots,0,-2)$,
$\lambda_2 = (0,\ldots,0,-2,-2), \ldots$,
$\lambda_n = (-2,\ldots,-2,-2)$.

The decomposition into simple $\ad(\frakgl_n)$-modules is as follows:
\begin{multline*}
\CC[\fraknp]
= \CC[x_{ij} \mid 1 \le i \le j \le n]
= \bigoplus_{k_1,\ldots,k_n \ge 0} V_{k_1 \lambda_1 + \cdots + k_n \lambda_n}
\\
= \bigoplus_{0 \le l_1 \le \cdots \le l_n} V_{(-2l_1,-2l_2,\ldots,-2l_n)}
= \bigoplus_{k_1,\ldots,k_n \ge 0} \langle 
f_1^{k_1} f_2^{k_2} \cdots f_n^{k_n}
\rangle_{\ad(\frakgl_n)},
\end{multline*}
where $x_{ij}$ denotes the linear coordinate function on $\fraknp$,
and 
$f_t = \det (x_{ij})_{n-t+1 \le i,j \le n}$.
We use the convention $x_{ij} = x_{ji}$ for $i > j$,
and $\langle f \rangle_{\ad(\frakgl_n)}$ is the $\ad(\frakgl_n)$-submodule
of $\CC[\fraknp]$ generated by $f \in \CC[\fraknp]$.

In the above decomposition
$f_1^{k_1} f_2^{k_2} \cdots f_n^{k_n}$ is the highest weight vector
in $V_{k_1 \lambda_1 + \cdots + k_n \lambda_n}$.
$f_n$ is the basic relative invariant of 
the prehomogeneous vector space $(K, \Ad, \fraknp) = (GL_n, \Ad, \Sym_n)$.
\end{example}

The $\frakk$-module $(\frakk, \ad, \CC[\fraknm])$ is
dual to $(\frakk, \ad, \CC[\fraknp])$,
and has a decomposition into simple $\ad(\frakk)$-modules 
similar to Lemma~\ref{lem:schmid}:
\begin{align*}
\CC[\fraknm]
= \bigoplus_{k_1,\ldots,k_r \ge 0} V_{k_1 \ol_1 + \cdots + k_r \ol_r},
\end{align*}
where $\ol_i = \gamma_{r-i+1} + \gamma_{r-i+1} + \cdots + \gamma_r$
is the highest weight of the contragredient representation
of $V_{\lambda_i}$.
Let $\of_i \in \CC[\fraknm]$
be a highest weight vector of $V_{\ol_i}$.
Then similarly to the case of $(\frakk, \ad, \CC[\fraknp])$,
$\of_1^{k_1} \of_2^{k_2} \cdots \of_r^{k_r} \in \CC[\fraknm]$
is a highest weight vector of $V_{k_1 \ol_1 + \cdots k_r \ol_r}$.

\subsection{$\ad(\frakk)$-module structure of $R/\Ann_R(F)$} 
\label{subsec:str-R/Ann(F)}

\begin{proposition}[Decomposition of {$R / \Ann_R(F)$} as an $\ad(\frakk)$-module]
\label{prop:ann}
Set $R = \CC[\fraknp]$.
The annihilator of a relative invariant $F = \of_r^{\,s} \in \CC[\fraknm]$
of $(K, \Ad, \fraknm)$
for a positive integer $s$ has the following decomposition.
\begin{align*}
\Ann_R(F) =
\bigoplus_{\substack{k_1,\ldots,k_r \ge 0 \\ k_1+\cdots+k_r > s
}}
V_{k_1 \lambda_1 + \cdots + k_r \lambda_r}.
\end{align*}
Therefore the decomposition of 
the Gorenstein algebra generated by $F$ is given by
\begin{align*}
R / \Ann_R(F) \simeq
\bigoplus_{\substack{k_1,\ldots,k_r \ge 0 \\ k_1+\cdots+k_r \le s
}}
V_{k_1 \lambda_1 + \cdots + k_r \lambda_r}.
\end{align*}

\end{proposition}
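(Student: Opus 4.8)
The plan is to leverage the multiplicity-free decomposition of Lemma~\ref{lem:schmid} together with the highest weight structure, so that the entire statement collapses to a single vanishing/non-vanishing question in each simple summand. First I would observe that $\Ann_R(F)$ is an $\ad(\frakk)$-submodule of $R=\CC[\fraknp]$. Indeed $F=\of_r^{\,s}$ spans the one-dimensional module $V_{s\ol_r}$, so $\frakk$ acts on it by a character, $X\cdot F=c_X F$ for $X\in\frakk$; since the differentiation pairing is $\frakk$-equivariant, $(X\cdot P)(F)=X\cdot(P(F))-P(X\cdot F)=-c_X\,P(F)$, whence $P\in\Ann_R(F)$ forces $X\cdot P\in\Ann_R(F)$. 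By multiplicity-freeness, $\Ann_R(F)$ is therefore the direct sum of a subfamily of the $V_{k_1\lambda_1+\cdots+k_r\lambda_r}$, and the second displayed decomposition is just the complementary sum, so it suffices to prove the first. Moreover, within a fixed summand the subspace annihilating $F$ is again a submodule by the same computation, so by irreducibility $V_{k_1\lambda_1+\cdots+k_r\lambda_r}\subseteq\Ann_R(F)$ if and only if its highest weight vector $P=f_1^{k_1}\cdots f_r^{k_r}$ satisfies $P(F)=0$. Everything is thus reduced to deciding when $(f_1^{k_1}\cdots f_r^{k_r})(\of_r^{\,s})=0$.

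For the vanishing direction I would argue by weights. Both $P$ and $F$ are $\frakk$-highest weight vectors, and as positive root vectors of $\frakk$ annihilate each of them, equivariance shows $P(F)$ is either $0$ or again a $\frakk$-highest weight vector. Writing $K_j=k_j+k_{j+1}+\cdots+k_r$, the weight of $\of_r^{\,s}$ is $s(\gamma_1+\cdots+\gamma_r)$ and the $R$-weight of $P$ is $-\sum_j K_j\gamma_j$, so $P(F)$ carries weight $\sum_j(s-K_j)\gamma_j$. The highest weights occurring in $\CC[\fraknm]$ are the $\sum_i m_i\ol_i$ with $m_i\ge0$, whose $\gamma_j$-coordinates $M_j=m_{r-j+1}+\cdots+m_r$ are nonnegative. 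Since $\gamma_1,\ldots,\gamma_r$ are pairwise strongly orthogonal, hence linearly independent, a nonzero $P(F)$ would force $s-K_j=M_j\ge0$ for all $j$, in particular $s-K_1\ge0$. Therefore $k_1+\cdots+k_r=K_1>s$ forces $P(F)=0$, giving the inclusion $\supseteq$ in the first formula.

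The remaining, genuinely harder, direction is non-vanishing: I must show $P(F)\neq0$ whenever $k_1+\cdots+k_r\le s$. Here I would first record a monotonicity: as $R$ is commutative, deleting one factor $f_i$ writes $P=f_i(\partial)$ applied to a smaller highest weight vector $P'$, so $P'(F)=0$ would give $P(F)=0$; contrapositively, non-vanishing propagates downward in the exponents, and it suffices to treat the maximal case $k_1+\cdots+k_r=s$, where the weight computation predicts $P(F)$ to be a multiple of $\of_1^{k_{r-1}}\of_2^{k_{r-2}}\cdots\of_{r-1}^{k_1}$. To show this multiple is nonzero I would pass to the theory of generalized Verma modules: realize $R\otimes\CC_\xi$, with $\xi$ the $\frakk$-character by which $\frakk$ acts on $F$, as the generalized Verma module $U(\frakg)\otimes_{U(\frakp)}\CC_\xi$ via the PBW isomorphism $U(\fraknm)\cong\Sym(\fraknm)\cong R$, identify its maximal $\frakg$-submodule with $\Ann_R(F)\otimes\CC_\xi$, and read off $R/\Ann_R(F)$ as the restriction to $\frakk$ of the resulting finite-dimensional simple $\frakg$-module.

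The main obstacle is exactly this identification of the maximal submodule with $\Ann_R(F)$, equivalently the non-vanishing of the relevant structure constant, which is governed by the $b$-function of the prehomogeneous vector space and is nonzero for $s\ge1$. Once that is in place, the $\frakg\downarrow\frakk$ branching rule yields $\bigoplus_{k_1+\cdots+k_r\le s}V_{k_1\lambda_1+\cdots+k_r\lambda_r}$, and both displayed decompositions follow. I expect the weight bookkeeping of the first three steps to be routine, while the Verma-module identification (and the attendant non-vanishing) is where the real work lies.
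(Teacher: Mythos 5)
Your reduction to highest weight vectors (via $\ad(\frakk)$-stability of $\Ann_R(F)$, irreducibility, and the multiplicity-freeness of Lemma~\ref{lem:schmid}) matches the paper, and your weight-theoretic argument for the vanishing direction is correct and is actually a clean alternative to the paper's (which instead derives vanishing from the fact that $-1$ is a zero of every $b$-function). The genuine gap is the non-vanishing direction, $k_1+\cdots+k_r\le s\Rightarrow(f_1^{k_1}\cdots f_r^{k_r})(\partial)\of_r^{\,s}\ne0$, which you correctly isolate as ``where the real work lies'' but do not carry out: the claim that ``the relevant structure constant \ldots\ is nonzero for $s\ge1$'' is precisely the statement to be proved, not a fact you can quote. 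The paper closes this by the identity~(\ref{eq:(10.3)}), $f_{r-i}(\partial)\,\of_1^{m_1}\cdots\of_i^{m_i}\of_r^{\,m+1}=b_{r-i}(m)\,\of_1^{m_1}\cdots\of_i^{m_i+1}\of_r^{\,m}$ up to nonzero scaling, which, applied one factor at a time, expresses your structure constant as a product of $b$-function values at the integers $s-1,s-2,\ldots,s-(k_1+\cdots+k_r)$; non-vanishing then follows because all zeros of these $b$-functions are negative, while your argument has no substitute for either ingredient.

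I would also flag that your proposed route for this step --- identify $\Ann_R(F)$ with the maximal submodule $Y_\mu$ of $M(s\varpi_{i_0})$ and then apply the $\frakg\downarrow\frakk$ branching rule --- is circular relative to the paper's logic: that identification is Proposition~\ref{prop:Ann(F)-is-submodule}, and there it is \emph{deduced from} Proposition~\ref{prop:ann} together with Lemma~\ref{lem:q-mu}. To use it in your direction you would need an independent proof that $\Ann_R(F)=Y_\mu$ (for instance, $\frakg$-equivariance of $P\mapsto P(\partial)\of_r^{\,s}$ onto a simple module) together with the explicit $\frakk$-decomposition of the finite-dimensional quotient; neither is supplied, and both amount to work comparable to the $b$-function computation you are trying to avoid. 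The reduction to the extremal case $k_1+\cdots+k_r=s$ and the identification of $P(F)$ as a scalar multiple of $\of_1^{k_{r-1}}\cdots\of_{r-1}^{k_1}$ are fine; it is only the non-vanishing of that scalar that remains unproved.
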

\begin{proof}
The second decomposition follows from the first one.
We prove the first decomposition. 

Since $F$ is a relative invariant under the action of $\Ad(K)$,
$\Ann_R(F) \subset \CC[\fraknp]$ is an $\ad(\frakk)$-submodule,
and is decomposed into a sum of $V_{k_1 \lambda_1 + \cdots + k_r \lambda_r}$.
Since $V_{k_1 \lambda_1 + \cdots + k_r \lambda_r}$ is irreducible,
$V_{k_1 \lambda_1 + \cdots + k_r \lambda_r} \subset \Ann_R(F)$
if and only if a nonzero polynomial in
$V_{k_1 \lambda_1 + \cdots + k_r \lambda_r}$ annihilates $F$
by using the identification of 
$\CC[\fraknp]$ with differential operators
(Equation~(\ref{eq:diff-op})).

Consider the differentiation
$(f_1^{k_1} f_2^{k_2} \cdots f_r^{k_r})(\partial)\of_r^{\,s}$.
We repeatedly use the formula 
(see \cite[Lemme~5.6]{MR910208}, \cite[Equation~(10.3)]{MR1679583})
\begin{align} \label{eq:(10.3)}
f_{r-i}(\partial) \of_1^{m_1} \of_2^{m_2} \cdots \of_i^{m_i} \of_r^{m+1}
= b_{r-i}(m) \of_1^{m_1} \of_2^{m_2} \cdots \of_{i-1}^{m_{i-1}} \of_i^{m_i+1} \of_r^{m}
\end{align}
(up to nonzero scaling)
for non-negative integers $m_1, \ldots, m_i$ and an integer $m$,
where the polynomial $b_{r-i}(m)$ in $m$ is the $b$-function of $\of_{r-i}$,
and we have
\begin{align*}
(f_1^{k_1} f_2^{k_2} \cdots f_r^{k_r})(\partial)\of_r^s
=
B(s) \of_1^{k_{r-1}} \of_2^{k_{r-2}} \cdots \of_{r-1}^{k_1} \of_r^{s-(k_1+\cdots+k_r)},
\end{align*}
where a polynomial $B(s)$ in $s$ is a product of $b$-functions
of $\of_1, \of_2, \ldots ,\of_r$
($b_{i}$ appears $k_i$ times)
evaluated at $s-k$ ($1 \le k \le k_1 + \cdots + k_r$).
Since it is known that zeros of $b$-functions are negative,
and $m = -1$ is always a (maximum integral) zero of any $b$-function,
$B(s)$ is equal to zero if and only if $s-k$ can be $-1$.
Namely, $B(s) = 0$ if and only if $s - (k_1 + \cdots + k_r) < 0$

Thus we have proved that
$V_{k_1 \lambda_1 + \cdots + k_r \lambda_r} \subset \Ann_R(F)$
if and only if 
$k_1 + k_2 + \cdots + k_r > s$.
\end{proof}

\begin{remark}[Generating set of $\Ann_R(F)$]
When $F = \of_r^s$ for a positive integer $s$,
$\Ann_R(F)$ is generated by $V_{(s+1)\lambda_1}$ 
as an ideal of $R = \CC[\fraknp]$.

Indeed,
$\Ann_R(F)$ is generated by
$V_{k_1 \lambda_1 + \cdots + k_r \lambda_r}$ with $k_1 + k_2 + \cdots + k_r > s$
by Proposition~\ref{prop:ann},
and this condition is weakened to $k_1 + k_2 + \cdots + k_r = s+1$,
since the highest weight vector of $V_{k_1 \lambda_1 + \cdots + k_r \lambda_r}$
is $f_1^{k_1} f_2^{k_2} \cdots f_r^{k_r}$.
By using Equation~(\ref{eq:(10.3)})
we can prove that
\begin{align*}
\CC[\fraknp]_1 
V_{k_1\lambda_1 + \cdots + k_{i-2}\lambda_{i-2} + 
  (k_{i-1}+1) \lambda_{i-1} + k_i \lambda_i}
\supset 
V_{k_1\lambda_1 + \cdots + k_{i-1}\lambda_{i-1} + (k_{i}+1) \lambda_{i}}.
\end{align*}
For the proof of the above formula
we need to consider `lower-rank version' of Equation~(\ref{eq:(10.3)}),
but we omit the details
(see 
\cite[Section~8]{MR1679583},
\cite[Section~2]{MR834108}
for the `lower-rank' setting).
Therefore,
by repeated use of the above formula,
it follows that
$\CC[\fraknp] V_{(s+1)\lambda_1} \supset V_{k_1 \lambda_1 + \cdots + k_r \lambda_r}$
whenever $k_1 + k_2 + \cdots + k_r = s+1$,
and we have proved 
$\Ann_R(F)$ is generated by $V_{(s+1)\lambda_1}$ 
as an ideal of $R$.

Moreover,
we can conclude that 
$\Ann_R(F)$ is generated by $f_1^{s+1}$
as an $\ad(\frakk)$-stable ideal of $R$.
For example, in Type (C$_n,n$)
(see Example~\ref{ex:main2} for the notation)
$\Ann_R(\of_n^{s})$ is generated by $x_{nn}^{s+1}$
as an $\ad(\frakgl_n)$-stable ideal of $R = \CC[\fraknp] = \CC[\Sym_n]$.
\end{remark}

\begin{example}[Narayana numbers]
In Type (C$_n,n$) (see Example~\ref{ex:main2} for the notation),
when we consider the basic relative invariant $F = \of_n \in \CC[\fraknm]$,
it follows from Proposition~\ref{prop:ann} that
\begin{align*}
R / \Ann_R(\of_n) 
&\simeq
V_0 + V_{\lambda_1} + V_{\lambda_2} + \cdots + V_{\lambda_n}
\\
&= 
V_{(0,\ldots,0)} + V_{(0,\ldots,0,-2)} + V_{(0,\ldots,0,-2,-2)} + \cdots +
V_{(-2,\ldots,-2)}.
\end{align*}
Since this decomposition coincides with homogeneous decomposition
as a graded algebra,
we can compute the Hilbert function of $R / \Ann_R(\of_n)$,
which is written as a sequence of dimensions of homogeneous components
in this paper,
using the Weyl dimension formula for irreducible representations
of $\frakgl_n$:
\begin{align*}
&\Hilb(R / \Ann_R(\of_n))
\\&\quad=
\left(
\tfrac{1}{n+1}\tbinom{n+1}{1}\tbinom{n+1}{0}, \;
\tfrac{1}{n+1}\tbinom{n+1}{2}\tbinom{n+1}{1}, \;
\ldots,
\tfrac{1}{n+1}\tbinom{n+1}{n+1}\tbinom{n+1}{n}
\right).
\end{align*}
This sequence consists of {\em Narayana numbers},
which are originated in combinatorics,
and defined as
$N(n,k) = \frac{1}{n} \binom{n}{k} \binom{n}{k-1}$.
\end{example}

\subsection{Generalized Verma modules} 
Define 
$\frakg$,
$\frakp$,
$\frakk$,
$K$ and
$\fraknp$
as in Definition~\ref{def:pv-parab}.
In addition, let $\fraknm$ be the opposite of $\fraknp$.
Suppose $(K, \Ad, \fraknp)$ is regular.

Let $(\frakp, \mu, \CC_\mu)$ be 
a one-dimensional representation of $\frakp$ ($\CC_\mu = \CC$),
and set
$M(\mu) = U(\frakg) \otimes_{U(\frakp)} \CC_\mu$,
which is called a {\em generalized Verma module} of $\frakg$
induced from $\mu$.
Since $(\frakp, \mu, \CC_\mu)$ is one-dimensional,
$\mu$ is a multiple of $\varpi_{i_0}$,
which is the fundamental weight of $\frakg$ corresponding to the simple root
$\alpha_{i_0}$
that characterizes the maximal parabolic subalgebra $\frakp$.
In addition, $\lambda_r$ is also a multiple of $\varpi_{i_0}$,
since $(\frakk, \ad, V_{\lambda_r})$ is one-dimensional.
Indeed, $\lambda_r = -2 \varpi_{i_0}$
\cite[Lemma~6.4]{MR1679583}.

Then it follows from $\frakg = \fraknm \oplus \frakp$ that
\begin{align*}
M(\mu)
=
U(\frakg) \otimes_{U(\frakp)} \CC_\mu
=
U(\fraknm) U(\frakp) \otimes_{U(\frakp)} \CC_\mu
\simeq
U(\fraknm)
\end{align*}
as $\CC$-vector spaces.
Since $\fraknm$ is a commutative Lie algebra,
we have
$M(\mu) \simeq S(\fraknm) \simeq \CC[\fraknp]$
as vector spaces,
where $S(\fraknm)$ is the symmetric algebra of $\fraknm$,
and the second isomorphism is by
$\fraknm \simeq (\fraknp)^*$ via the Killing form on $\frakg$.
Thus we have the action of $\frakg$ on $\CC[\fraknp]$,
and we denote this representation by
$(\frakg, \psi_\mu, \CC[\fraknp])$.

The explicit form of the action of $(\frakg, \psi_\mu, \CC[\fraknp])$
is given in the following lemma.
The action of $\fraknp$ is, in fact,
a differential operator of second order with polynomial coefficients,
though
we do not need it, and omit the explicit form.

\begin{lemma}[Actions of generalized Verma modules, {\cite[Lemma~3.2]{MR1679583}}] \label{lem:psi_mu}
The action of $\fraknm$ and $\frakk$ 
under the representation $(\frakg, \psi_\mu, \CC[\fraknp])$
is given as follows:
\begin{align*}
\psi_\mu(X) &= \times X
&& (X \in \fraknm),
\\
\psi_\mu(X) &= \ad(X) + \mu(X),
&& (X \in \frakk),
\end{align*}
where 
$\times X$ denotes the multiplication map on $\CC[\fraknp]$ by 
a linear polynomial $X \in \fraknm \simeq (\fraknp)^* \simeq \CC[\fraknp]_1$,
and $\mu(X)$ denotes the multiplication map by 
a scalar $\mu(X)$.
\end{lemma}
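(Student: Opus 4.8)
The plan is to compute the $\frakg$-action on $M(\mu)$ directly on a generator and to read it off through the vector-space isomorphism $M(\mu) \simeq S(\fraknm) \simeq \CC[\fraknp]$ established just above the statement. By the Poincar\'e--Birkhoff--Witt theorem applied to the decomposition $\frakg = \fraknm \oplus \frakp$, every element of $M(\mu)$ is uniquely of the form $u \otimes 1$ with $u \in U(\fraknm)$, and since $\fraknm$ is abelian we may identify $u$ with an element of $S(\fraknm) \simeq \CC[\fraknp]$. It therefore suffices to evaluate $X \cdot (u \otimes 1)$ for $X \in \fraknm$ and for $X \in \frakk$ separately, and to match the result with the two claimed formulas.

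For $X \in \fraknm$ the action is simply left multiplication in $U(\frakg)$, so $X \cdot (u \otimes 1) = (Xu) \otimes 1$. Because $\fraknm$ is commutative we have $U(\fraknm) = S(\fraknm)$, and $Xu$ is nothing but the product of the linear element $X \in \fraknm \simeq \CC[\fraknp]_1$ with $u$. Hence under the identification $\psi_\mu(X)$ is multiplication $\times X$, which is the first formula. It is precisely here that commutativity of $\fraknm$ is essential: in a non-commutative parabolic the action of $\fraknm$ would not reduce to multiplication.

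For $X \in \frakk$ I would move $X$ to the right past $u = X_1 \cdots X_k$ (with $X_j \in \fraknm$) inside $U(\frakg)$, using $[\frakk, \fraknm] \subseteq \fraknm$. This gives
\begin{align*}
X u = u X + \sum_{j=1}^{k} X_1 \cdots X_{j-1} [X, X_j] X_{j+1} \cdots X_k,
\end{align*}
and since each $[X, X_j] \in \fraknm$ and $\fraknm$ is abelian, the sum is exactly $\ad(X) u$ with $\ad(X)$ acting as the derivation of $S(\fraknm)$ extending $\ad(X)|_{\fraknm}$. Now $X \in \frakp$, so the tensor relation over $U(\frakp)$ yields $(uX) \otimes 1 = u \otimes (X \cdot 1) = \mu(X)\,(u \otimes 1)$, where $X \cdot 1 = \mu(X)$ is the action on $\CC_\mu$. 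Combining the two contributions gives $X \cdot (u \otimes 1) = (\ad(X) u) \otimes 1 + \mu(X)(u \otimes 1)$, that is $\psi_\mu(X) = \ad(X) + \mu(X)$, the second formula.

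The step needing genuine care is identifying the derivation $\ad(X)$ of $S(\fraknm)$ produced by this commutator computation with the $\ad(\frakk)$-action on $\CC[\fraknp]$ recalled above. This reduces to checking that the Killing-form isomorphism $\fraknm \simeq (\fraknp)^*$ intertwines $\ad(X)$ on $\fraknm$ with the contragredient action on $(\fraknp)^* = \CC[\fraknp]_1$; it is a direct consequence of the $\ad$-invariance $(\ad(X)Y, Z) = -(Y, \ad(X)Z)$ combined with the sign in the definition of the contragredient action, and it extends automatically to all of $S$ because both sides act by derivations. Everything else is routine PBW bookkeeping, so I expect this compatibility check, and keeping track of the single sign it involves, to be the only real obstacle.
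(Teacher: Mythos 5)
Your proposal is correct and is essentially the computation the paper itself gives: both arguments act on $u \otimes 1_\mu$, observe that $\fraknm$ acts by left multiplication in $U(\fraknm) = S(\fraknm)$, and for $X \in \frakk$ commute $X$ past $u$ to split the action into $\ad(X)u \otimes 1_\mu$ plus $u \otimes \mu(X)1_\mu$. Your added care about the Killing-form identification of $\fraknm$ with $(\fraknp)^*$ is a point the paper leaves implicit, but it does not change the argument.
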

\begin{proof}
Let $f \in \CC[\fraknp]$.
We regard $f$ as an element in $S(\fraknm)$, 
and $f \otimes 1_\mu \in M(\lambda)$,
where $1_\mu = 1 \in \CC_\mu$ is the basis of $\CC_\mu$.

If $X \in \fraknm$, then
$X(f \otimes 1_\mu) = X f \otimes 1_\mu$,
and $X f \in S(\fraknm)$.
Therefore $\psi_\mu(X)f = X f$.

If $X \in \frakk$, then
\begin{align*}
X(f \otimes 1_\mu) = X f \otimes 1_\mu
= (f X + [X,f]) \otimes 1_\mu
&= f \otimes \mu(X) 1_\mu + \ad(X)f \otimes 1_\mu
\\&= (\mu(X)f + \ad(X)f) \otimes 1_\mu.
\end{align*}
Therefore $\psi_\mu(X)f = \ad(X)f + \mu(X)f$.
\end{proof}

\begin{remark}[Decomposition of {$\CC[\fraknp]$} as a $\psi_\mu(\frakk)$-module]
The decomposition of $\CC[\fraknp]$ into simple $\psi_\mu(\frakk)$-modules
coincides with that as $\ad(\frakk)$-modules,
since these two actions differ only by the constant multiplication by $\mu(X)$.
But highest weights of irreducible components change by the constant $\mu(X)$.
\end{remark}

\subsection{Maximal submodules of $M(\mu)$} 
We continue to use the notation of the previous subsection.

It is known that there exists a unique maximal submodule of $M(\mu)$,
and denote the submodule by $Y_{\mu}$.
If we regard $Y_{\mu}$ as an $\ad(\frakk)$-submodule of $\CC]\fraknp]$,
$Y_{\mu}$ must decompose into a sum of simple modules $V_\lambda$.

\begin{lemma}[Maximal submodules of $M(\mu)$, {\cite[Lemma~10.3, Proposition~10.7]{MR1679583}}]
\label{lem:q-mu}
Let $\mu = s \varpi_{i_0}$ ($s \in \CC$) be a one-dimensional representation of $\frakp$,
and consider the generalized Verma module $M(\mu)$.
For 
$\lambda = k_1 \lambda_1 + \cdots + k_r \lambda_r$,
$V_\lambda \subset Y_\mu$ 
if and only if $q_\mu(\lambda) = 0$,
where the polynomial $q_\mu(\lambda)$ in $k_1, k_2, \ldots, k_r$ is defined as
\begin{align*}
q_\mu(\lambda) = 
\prod_{i=0}^{r-1}
\prod_{l=0}^{k_{i+1}+\cdots+k_r-1}
( \frac{id}{2} + s - l),
\end{align*}
where
$d = 1, 2, 4, 2m-3, 2m-4$
for
$({\rm C}_n, n)$,
$({\rm A}_{2n-1}, n)$,
$({\rm E}_7, 7)$,
$({\rm B}_m, 1)$,
$({\rm D}_m, 1)$,
respectively.
\qed
\end{lemma}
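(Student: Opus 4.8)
The plan is to realize $Y_\mu$ as the radical of a contravariant (Shapovalov-type) form on $M(\mu)$, reduce the question to a single scalar attached to each $\ad(\frakk)$-type, and then evaluate that scalar with the $b$-function identity already recorded in Equation~(\ref{eq:(10.3)}).

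First I would equip $M(\mu)\simeq\CC[\fraknp]$ with the contravariant bilinear form $\langle\cdot,\cdot\rangle_\mu$ attached to the Cartan anti-involution $\sigma$ of $\frakg$ (the anti-automorphism interchanging $\fraknp$ and $\fraknm$ and fixing $\frakk$), normalized by $\langle 1,1\rangle_\mu=1$ on the cyclic vector and characterized by $\langle\psi_\mu(X)u,v\rangle_\mu=\langle u,\psi_\mu(\sigma X)v\rangle_\mu$. Its radical is a $\frakg$-submodule not containing the cyclic vector, so by the uniqueness of the maximal submodule (already noted in the text) it equals $Y_\mu$. Because $\CC[\fraknp]=\bigoplus_\lambda V_\lambda$ is multiplicity-free as a $\psi_\mu(\frakk)$-module (Lemma~\ref{lem:schmid}) and the form is $\frakk$-contravariant, distinct types are orthogonal and the form restricts to each $V_\lambda$ as a scalar $c_\mu(\lambda)$ times the (unique up to scale, nondegenerate) $\frakk$-contravariant form on $V_\lambda$. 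Hence $V_\lambda\subset Y_\mu$ if and only if $c_\mu(\lambda)=0$, and the lemma is reduced to locating the zeros of $c_\mu(\lambda)$.

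Next I would compute $c_\mu(\lambda)$ on the highest weight vector $f_1^{k_1}\cdots f_r^{k_r}$ of $V_\lambda$. Under the anti-involution the form pairs $M(\mu)$ against the contragredient module, realized on $\CC[\fraknm]$ with cyclic vector the (formal) power $\of_r^{\,s}$, and by Lemma~\ref{lem:psi_mu} this pairing is the apolarity pairing $f\mapsto f(\partial)\of_r^{\,s}$. Thus $c_\mu(\lambda)$ equals, up to the known normalization, the scalar $B(s)$ produced by $(f_1^{k_1}\cdots f_r^{k_r})(\partial)\of_r^{\,s}$, computed exactly as in the proof of Proposition~\ref{prop:ann}. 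The two differences are that here $s\in\CC$ is arbitrary, so I would read $\of_r^{\,s}$ as a formal power and continue Equation~(\ref{eq:(10.3)}) analytically in $s$; and, crucially, I would retain \emph{all} zeros of the $b$-functions rather than only the integral zero $m=-1$ used in Proposition~\ref{prop:ann}. Assembling the factors along the descent through the strongly orthogonal chain $\gamma_1,\ldots,\gamma_r$ — using the lower-rank analogues of Equation~(\ref{eq:(10.3)}) for the tails $\gamma_{i+1},\ldots,\gamma_r$ together with the explicit $b$-function $b_j(m)=\prod_{i=0}^{j-1}(m+1+\tfrac{id}{2})$ — turns $B(s)$ into the double product $\prod_{i=0}^{r-1}\prod_{l=0}^{k_{i+1}+\cdots+k_r-1}(\tfrac{id}{2}+s-l)$ up to a nonzero constant. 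The shift $\tfrac{id}{2}$ is exactly the cumulative leading contribution of the $b$-functions after $i$ steps down the chain, and the parameter $d$ is the root multiplicity, read off type by type from the basic relative invariant (for instance $d=1$ for $(\mathrm{C}_n,n)$, where $\of_r=\det$ on symmetric matrices). Comparing zero sets then yields $V_\lambda\subset Y_\mu\iff q_\mu(\lambda)=0$.

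The main obstacle is this middle step: extracting the exact double-product shape of $c_\mu(\lambda)$. It is delicate for two reasons. First, one must control not merely the top identity but its lower-rank versions, so that every factor $(\tfrac{id}{2}+s-l)$ is accounted for and attached to the correct index $i$ along the chain. Second, one must check that the accumulated leading coefficients of the $b$-functions reproduce precisely the shifts $\tfrac{id}{2}$ with the stated values of $d$, and justify the analytic continuation in $s$ so that the differential-operator computation genuinely detects the $\frakg$-submodule $Y_\mu$ and not merely a $\frakk$-submodule. Everything else — the identification of the radical with $Y_\mu$, the reduction by Schur's lemma, and the final comparison of zero loci — is formal highest-weight theory of the Shapovalov type.
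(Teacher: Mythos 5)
The paper does not actually prove this lemma: it is imported verbatim from \cite[Lemma~10.3, Proposition~10.7]{MR1679583} (note the QED symbol attached to the statement itself), so the only ``paper proof'' to compare against is that reference, whose argument is precisely the contravariant-form-plus-$b$-function computation. Your proposal is essentially a correct reconstruction of that argument --- identify $Y_\mu$ with the radical of the contravariant form, use multiplicity-freeness and $\frakk$-contravariance to reduce to one scalar per $\frakk$-type, and evaluate that scalar as the product of $b$-function values accumulated along the strongly orthogonal chain, which with $b_j(m)=\prod_{i=0}^{j-1}\bigl(m+1+\tfrac{id}{2}\bigr)$ up to a nonzero constant assembles into exactly the stated double product (your spot checks for $\lambda=k\lambda_r$ and for $r=2$ both come out right).
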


\begin{proposition}[$\Ann_R(F)$ is a submodule of $M(\mu)$]
\label{prop:Ann(F)-is-submodule}
Let $s$ be a positive integer, and $\mu = s \varpi_{i_0}$.
Then the maximal submodule $Y_\mu$ of
the generalized Verma module $M(\mu) \simeq (\frakg, \psi_\mu, \CC[\fraknp])$
is equal to $\Ann_R(\of_r^s) \subset \CC[\fraknp]$.

Therefore $\frakg$ acts on $R / \Ann_R(\of_r^s)$ via $\psi_\mu$.
\end{proposition}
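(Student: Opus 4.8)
The plan is to identify $Y_\mu$ with $\Ann_R(\of_r^s)$ by showing that, viewed as $\ad(\frakk)$-submodules of $\CC[\fraknp]$, they contain exactly the same simple constituents. By the remark following Lemma~\ref{lem:psi_mu}, the $\psi_\mu(\frakk)$-action and the $\ad(\frakk)$-action on $\CC[\fraknp]$ differ only by the scalar $\mu(X)$, so any $\frakg$-submodule of $M(\mu)$—in particular $Y_\mu$—is automatically an $\ad(\frakk)$-submodule; and $\Ann_R(\of_r^s)$ is an $\ad(\frakk)$-submodule by Proposition~\ref{prop:ann}. Since Schmid's decomposition (Lemma~\ref{lem:schmid}) is multiplicity-free, each $V_\lambda$ with $\lambda = k_1\lambda_1 + \cdots + k_r\lambda_r$ occurs exactly once in $\CC[\fraknp]$, so two such submodules coincide precisely when they contain the same $V_\lambda$'s. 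Thus it suffices to check that the two membership conditions agree.

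First I would record the two conditions. By Lemma~\ref{lem:q-mu}, $V_\lambda \subset Y_\mu$ if and only if $q_\mu(\lambda) = 0$, while by Proposition~\ref{prop:ann}, $V_\lambda \subset \Ann_R(\of_r^s)$ if and only if $k_1 + \cdots + k_r > s$. Hence the proposition reduces to the purely combinatorial claim that, for $\mu = s\varpi_{i_0}$ with $s$ a positive integer,
\[
q_\mu(\lambda) = 0 \quad\Longleftrightarrow\quad k_1 + k_2 + \cdots + k_r > s .
\]

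The decisive step is the analysis of the factors of
\[
q_\mu(\lambda) = \prod_{i=0}^{r-1}\prod_{l=0}^{k_{i+1}+\cdots+k_r-1}\Bigl(\tfrac{id}{2} + s - l\Bigr).
\]
The key observation is that the outermost block $i=0$ equals $\prod_{l=0}^{k_1+\cdots+k_r-1}(s-l)$, which vanishes exactly when the index value $l = s$ falls in its range, i.e. when $s \le k_1 + \cdots + k_r - 1$, equivalently $k_1 + \cdots + k_r > s$; this gives the ``$\Leftarrow$'' direction at once. For ``$\Rightarrow$'' I would argue the contrapositive: if $k_1 + \cdots + k_r \le s$, then for every $i$ the index satisfies $l \le k_{i+1} + \cdots + k_r - 1 \le s - 1$, so each factor obeys $\tfrac{id}{2} + s - l \ge \tfrac{id}{2} + 1 > 0$, using that $d > 0$ in every case of Lemma~\ref{lem:q-mu} and that $s$ is a positive integer. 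Every factor being positive forces $q_\mu(\lambda) \ne 0$, establishing the equivalence and hence $Y_\mu = \Ann_R(\of_r^s)$. The final assertion is then immediate, since $R/\Ann_R(\of_r^s) = M(\mu)/Y_\mu$ is the quotient of the $\frakg$-module $M(\mu) \simeq (\frakg, \psi_\mu, \CC[\fraknp])$ by a $\frakg$-submodule, so $\frakg$ acts on it through $\psi_\mu$. I expect the main obstacle to be not conceptual but bookkeeping: confirming that no factor with $i \ge 1$ can accidentally vanish when $k_1 + \cdots + k_r \le s$, which is precisely where the strict positivity of $d$ and the integrality of $s$ are used.
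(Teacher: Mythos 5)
Your proposal is correct and follows essentially the same route as the paper: both reduce the claim, via multiplicity-freeness of Schmid's decomposition, to checking that the vanishing condition $q_\mu(\lambda)=0$ of Lemma~\ref{lem:q-mu} is equivalent to the condition $k_1+\cdots+k_r>s$ of Proposition~\ref{prop:ann}, with the $i=0$ factor supplying the vanishing and positivity of all factors ruling it out when $k_1+\cdots+k_r\le s$. Your handling of the factors with $i\ge 1$ is in fact slightly more explicit than the paper's ``minimum of $id/2-l$'' phrasing, but the argument is the same.
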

\begin{proof}
By Lemma~\ref{lem:q-mu}
an irreducible component $V_\lambda$
($\lambda = k_1 \lambda_1 + \cdots + k_r \lambda_r$)
of $(\frakg, \psi_\mu, \CC[\fraknp])$
is contained in $Y_\mu$
if and only if
$id/2 + s - l = 0$ for some
$i = 0,1,\ldots,r-1$ and $l = 0,1,\ldots,k_{i+1}+k_{i+2}+\cdots+k_r-1$.
This is equivalent to 
that the minimum of $id/2 - l$ is less than or equal to $-s$.
$id/2$ takes the minimum value when $i=0$,
and 
$-l$ takes the minimum value when $i=0$ and $l = k_1+k_2+\cdots+k_r-1$.
Thus the minimum of $id/2 - l$ is equal to $-(k_1+k_2+\cdots+k_r-1)$.
Therefore $V_\lambda \subset Y_\mu$
if and only if
$ - (k_1+k_2+\cdots+k_r-1) \le -s$,
that is,
$ k_1+k_2+\cdots+k_r > s$.

On the other hand 
$V_\lambda \subset \Ann_R(\of_r^s)$
if and only if
$ k_1+k_2+\cdots+k_r > s$
by Proposition~\ref{prop:ann}.
Thus we have proved 
$\Ann_R(\of_r^s) = Y_\mu$.
\end{proof}

\subsection{Proof of the main theorem} 

The following lemma is 
essentially the same as \cite[Theorem~3.32]{MR3112920},
where the multiplication map by $L$ corresponds to $X \in \fraksl_2$,
but $\times L$ corresponds to $Y \in \fraksl_2$ in our lemma
for the proof of Theorem~\ref{thm:main}.

\begin{lemma}[Condition for the strong Lefschetz property]
\label{lem:key}
Let $I$ be a homogeneous ideal of $R = \CC[x_1,x_2,\ldots,x_n]$.
Let $\fraksl_2 = \CC X + \CC Y + \CC H$
be the complex simple Lie algebra,
where
$[X,Y] = H$,
$[H,X] = 2X$, and
$[H,Y] = -2Y$.

When $A = R/I$ is a graded Artinian algebra
with a symmetric Hilbert function,
the following two conditions are equivalent:
\begin{enumerate}[(1)]
  \item
  $A = R/I$ has the strong Lefschetz property,
  and $L \in A_1$ is a Lefschetz element.
  \item
  There exists an action of $\fraksl_2$ on $A$
  such that
  \begin{enumerate}[(a)]
    \item 
    The weight space decomposition of $A$ coincides with 
    the homogeneous decomposition of $A$, and
    \item 
    The action of $Y \in \fraksl_2$ on $A$
    coincides with
    the multiplication map by $L \in A_1$.
    \qed
  \end{enumerate}
\end{enumerate}
\end{lemma}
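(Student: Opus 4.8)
The plan is to prove the two implications separately, using throughout the representation theory of finite-dimensional $\fraksl_2$-modules. Since $A$ is Artinian it is finite-dimensional, so any $\fraksl_2$-action on it decomposes into a direct sum of irreducibles, and in the irreducible $V(m)$ of highest weight $m$ the weights are $m, m-2, \ldots, -m$, each spanning a one-dimensional weight space. The two facts I will lean on are the symmetry of the weight multiset about $0$ and the statement that the $w$-th power of the lowering operator restricts to an isomorphism between the weight-$w$ and weight-$(-w)$ spaces of each $V(m)$ for $w \ge 0$.

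For $(2) \Rightarrow (1)$ I would first identify the weights. By (a) each homogeneous component $A_i$ is a weight space; writing $E$ for the degree operator (acting by $i$ on $A_i$), a short computation from $[H,Y]=-2Y$ and the fact that $\times L = Y$ has degree $+1$ shows that $H + 2E$ commutes with all of $\fraksl_2$, so it is a central, diagonalizable operator. Condition (a) forces it to be a single scalar, and since the weight multiset is symmetric about $0$ while the degrees range over $0, \ldots, c$, that scalar must be $c$; hence $\mathrm{wt}(A_i) = c - 2i$. Then $\times L^{c-2i} = Y^{c-2i}$ sends the weight-$(c-2i)$ space $A_i$ to the weight-$(-(c-2i))$ space $A_{c-i}$, and the standard fact above, verified on a single irreducible and summed over the decomposition, shows this multiplication is bijective for every $i \le \lfloor c/2 \rfloor$. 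This is precisely the strong Lefschetz property with Lefschetz element $L$.

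For the converse $(1) \Rightarrow (2)$ I would build the action by hand. Set $Y = \times L$ and let $H$ act on $A_i$ by the scalar $c - 2i$; then (a) and (b) hold by construction, the relation $[H,Y] = -2Y$ is a one-line degree check, and $[H,X] = 2X$ will hold automatically for any operator $X$ of degree $-1$. The entire content is to produce such an $X$ with $[X,Y] = H$. Here I would use the Lefschetz decomposition supplied by the SLP: setting $P_i = \ker\bigl(Y^{\,c-2i+1}|_{A_i}\bigr) \subseteq A_i$ for $i \le c/2$, the bijectivity of the maps $Y^{c-2i}$ yields a decomposition of $A$ into $Y$-strings $P_i, YP_i, \ldots, Y^{c-2i}P_i$, each of which I declare to be an irreducible $\fraksl_2$-module of highest weight $c-2i$. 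Defining $X$ on each string by the scalars forced by the $\fraksl_2$-relations and extending linearly produces the raising operator, after which $[X,Y]=H$ is checked string by string.

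I expect the construction of $X$ in the direction $(1) \Rightarrow (2)$ to be the main obstacle, since that is where the combinatorial input of the strong Lefschetz property must be converted into the algebraic $\fraksl_2$-relations, and where one must verify that the string decomposition is well defined and that the resulting $X$ satisfies $[X,Y] = H$ globally rather than only on each summand. By contrast $(2) \Rightarrow (1)$ is routine once the normalization $\mathrm{wt}(A_i) = c - 2i$ is in place. An alternative to the explicit construction would be to complete the commuting pair $(Y,H)$ to an $\fraksl_2$-triple via the Jacobson--Morozov theorem, but the explicit string decomposition keeps the correspondence with the grading transparent and respects the convention $\times L = Y$ recorded in the remark preceding the lemma.
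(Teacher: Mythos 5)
The paper does not actually prove this lemma: it is stated with a \qed and attributed to \cite[Theorem~3.32]{MR3112920} (with the roles of $X$ and $Y$ interchanged), so the relevant comparison is with the standard argument in that reference. Your $(1)\Rightarrow(2)$ direction is exactly that argument --- the Lefschetz decomposition $A_i = P_i \oplus L A_{i-1}$ into $Y$-strings, each made into a copy of $V(c-2i)$ --- and the points you flag as needing verification (that the strings exhaust $A$, that $[X,Y]=H$ holds globally) are the right ones and go through using the bijectivity of the maps $\times L^{c-2i}$.

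The genuine gap is in the normalization step of $(2)\Rightarrow(1)$, where you claim that $H+2E$ is central and that condition (a) then forces it to be a single scalar. Neither assertion is justified. The computation $[H+2E,Y]=0$ is fine, but $[H+2E,X]=0$ requires $[E,X]=-X$, i.e.\ that $X$ is an operator of pure degree $-1$; condition (a) only tells you that $X$ carries the weight-$w$ space $A_i$ into the weight-$(w+2)$ space, which is $A_j$ for some a priori unknown $j$, and $j=i-1$ is precisely the alignment $\mathrm{wt}(A_{i-1})=\mathrm{wt}(A_i)+2$ you are trying to establish. Worse, a central diagonalizable operator on a reducible module need not be a scalar. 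Concretely, let $A=\CC 1\oplus \CC u\oplus \CC v$ with $A_0=\CC 1$, $A_1=\CC u$, $A_2=\CC v$ and $u^2=uv=0$ (a graded Artinian algebra with symmetric Hilbert function $(1,1,1)$, though not standard graded). Letting $\fraksl_2$ act with $\CC 1\oplus\CC u\simeq V(1)$ (weights $1,-1$) and $\CC v\simeq V(0)$ (weight $0$), conditions (a) and (b) hold with $L=u$, and $H+2E$ is central with eigenvalues $1,1,4$ --- not a scalar --- while $\times L^2\colon A_0\to A_2$ is zero, so the SLP fails. Your argument for $(2)\Rightarrow(1)$ never uses that $A=R/I$ is a quotient of a standard graded polynomial ring, so it would ``prove'' the conclusion for this example as well; hence it cannot be correct as written. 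A correct proof must invoke standard gradedness (e.g.\ $A_{i+1}=A_1A_i$): one shows each irreducible summand $V(m)$ occupies $m+1$ consecutive degrees because $Y=\times L$ has degree $+1$ and is injective off the lowest weight space, so that $i\mapsto \mathrm{wt}(A_i)+2i$ is constant on each summand's support, and then uses generation in degree one together with the symmetry of the weight multiset and of the Hilbert function to rule out misaligned summands and conclude $\mathrm{wt}(A_i)=c-2i$. Once that normalization is secured, the rest of your $(2)\Rightarrow(1)$ argument (bijectivity of $Y^{c-2i}$ between opposite weight spaces) is correct.
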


Set $R = \CC[\fraknp]$, and $F = \of_r^s \in \CC[\fraknm]$, 
where $s$ is a positive integer.
Set $\mu = s\varpi_{i_0}$ 
so that $\frakg$ acts on $R / \Ann_R(F)$ through $\psi_\mu$
(Proposition~\ref{prop:Ann(F)-is-submodule}). 
To prove the strong Lefschetz property of $R / \Ann_R(F)$,
we will take an $\fraksl_2$-triple $X, Y, H \in \frakg$
so that the action of $\fraksl_2$ on $R / \Ann_R(F)$ via $\psi_\mu$
satisfies the condition (2) of Lemma~\ref{lem:key}.

First, $H \in \frakh$ is uniquely determined.
Indeed, by the condition (2) (a) of Lemma~\ref{lem:key},
the action of $\ad(H)$ on $\fraknm \; (\simeq \CC[\fraknp]_1)$
should be the multiplication by $-2$,
since the eigenspaces of $\psi_\mu(H) = \ad(H) + \mu(H)$
on $R / \Ann_R(F)$ coincide with the homogeneous spaces of $R / \Ann_R(F)$,
and the eigenvalues must decrease by two when the degrees of homogeneous spaces increase by one.
Therefore $H$ is the unique element in the one-dimensional center of $\frakk$,
which is contained in the Cartan subalgebra $\frakh$, satisfying
\begin{align*}
& \alpha_{i_0}(H) = 2,
\\
& \alpha(H) = 0 \quad (\text{for any simple root $\alpha$ other than $\alpha_{i_0}$}).
\end{align*}

The existence of $X \in \fraknp$ and $Y \in \fraknm$ such that $[X, Y] = H$
is classified using weighted Dynkin diagrams
(the Dynkin-Kostant classification, \cite[Theorem~3.5.4]{MR1251060}).
But in our setting
we can find $X$ and $Y$ without argument about the classification.
Namely, we can take $X$ and $Y$ as
\begin{align*}
X = X_{\gamma_1} + X_{\gamma_2} + \cdots + X_{\gamma_r} \in \fraknp,
\qquad
Y = Y_{\gamma_1} + Y_{\gamma_2} + \cdots + Y_{\gamma_r} \in \fraknm,
\end{align*}
where 
$X_{\gamma_i} \in \fraknp$ and $Y_{\gamma_i} \in \fraknm$ are root vectors
corresponding to $\pm\gamma_i$
such that $X, Y, H$ forms an $\fraksl_2$-triple.

Then the action of $\psi_\mu(Y)$ on $R / \Ann_R(F)$ is the multiplication by
$Y \in \fraknm \simeq \CC[\fraknp]_1$ (Lemma~\ref{lem:psi_mu}),
and the weight space decomposition with respect to $\psi_\mu(H)$
on $R / \Ann_R(F)$ coincides with the homogeneous decomposition.
Therefore it follows from Lemma~\ref{lem:key} 
that $R / \Ann_R(F)$ has the strong Lefschetz property,
and we have proved the first paragraph of Theorem~\ref{thm:main}.

A linear coordinate change by the action of $\Ad(K)$ on $\CC[\fraknp]$
causes an automorphism on $\CC[\fraknp] / \Ann_R(F)$,
and clearly preserves the strong Lefschetz property.
Therefore every element of $\Ad(K)$-orbit on $\fraknm$
through $Y \in \fraknm$ is a Lefschetz element.
Conversely,
representatives of orbits of lower dimensions are
$Y_{\gamma_1} + Y_{\gamma_2} + \cdots + Y_{\gamma_i} 
\in \fraknm$ ($0 \le i < r$)
\cite[Th\'eor\`eme~2.8]{MR834108},
and this element never gives an $\fraksl_2$-triple containing $H$.
This means that the set of Lefschetz element is equal to
the open $\Ad(K)$-orbit on $\fraknm$.

\begin{example}[Type (C$_n,n$)]
In the case of type $({\rm C}_n, n)$
(see Example~\ref{ex:main2} for the notation),
$\fraksl_2 \subset \frakg = \fraksp_{2n}$ is given by
\begin{align*}
H &= \begin{pmatrix} 1_n&0 \\ 0&-1_n \end{pmatrix},
&
X &= \begin{pmatrix} 0 & 1_n \\ 0&0 \end{pmatrix},
&
Y &= \begin{pmatrix} 0 & 0 \\ 1_n&0 \end{pmatrix},
\end{align*}
where $1_n$ denotes the identity matrix of size $n$.

The set of Lefschetz elements is the set of full-rank matrices
in $\fraknm$, which is the $GL_n$-orbit through $Y$.
In particular, $Y \in \fraknm$ is a Lefschetz element, 
and, in a form of polynomial, it is
$x_{11} + x_{22} + \cdots + x_{nn} \in \CC[\fraknp]$.
\end{example}

\bibliographystyle{plain}
\bibliography{lef-gor-relinv.bib}

\begin{thebibliography}{10}

\bibitem{MR1251060}
David~H. Collingwood and William~M. McGovern.
\newblock {\em Nilpotent orbits in semisimple {L}ie algebras}.
\newblock Van Nostrand Reinhold Mathematics Series. Van Nostrand Reinhold Co.,
  New York, 1993.

\bibitem{MR3686979}
Rodrigo Gondim.
\newblock On higher {H}essians and the {L}efschetz properties.
\newblock {\em J. Algebra}, 489:241--263, 2017.

\bibitem{MR3282110}
Rodrigo Gondim and Francesco Russo.
\newblock On cubic hypersurfaces with vanishing hessian.
\newblock {\em J. Pure Appl. Algebra}, 219(4):779--806, 2015.

\bibitem{MR3750213}
Rodrigo Gondim and Giuseppe Zappal\`a.
\newblock Lefschetz properties for {A}rtinian {G}orenstein algebras presented
  by quadrics.
\newblock {\em Proc. Amer. Math. Soc.}, 146(3):993--1003, 2018.

\bibitem{MR3958091}
Rodrigo Gondim and Giuseppe Zappal\`a.
\newblock On mixed {H}essians and the {L}efschetz properties.
\newblock {\em J. Pure Appl. Algebra}, 223(10):4268--4282, 2019.

\bibitem{MR3112920}
Tadahito Harima, Toshiaki Maeno, Hideaki Morita, Yasuhide Numata, Akihito
  Wachi, and Junzo Watanabe.
\newblock {\em The {L}efschetz properties}, volume 2080 of {\em Lecture Notes
  in Mathematics}.
\newblock Springer, Heidelberg, 2013.

\bibitem{MR3566530}
Toshiaki Maeno and Yasuhide Numata.
\newblock Sperner property and finite-dimensional {G}orenstein algebras
  associated to matroids.
\newblock {\em J. Commut. Algebra}, 8(4):549--570, 2016.

\bibitem{MR2817446}
Toshiaki Maeno, Yasuhide Numata, and Akihito Wachi.
\newblock Strong {L}efschetz elements of the coinvariant rings of finite
  {C}oxeter groups.
\newblock {\em Algebr. Represent. Theory}, 14(4):625--638, 2011.

\bibitem{MR2594646}
Toshiaki Maeno and Junzo Watanabe.
\newblock Lefschetz elements of {A}rtinian {G}orenstein algebras and {H}essians
  of homogeneous polynomials.
\newblock {\em Illinois J. Math.}, 53(2):591--603, 2009.

\bibitem{MR834108}
Iris Muller, Hubert Rubenthaler, and G\'{e}rard Schiffmann.
\newblock Structure des espaces pr\'{e}homog\`enes associ\'{e}s \`a certaines
  alg\`ebres de {L}ie gradu\'{e}es.
\newblock {\em Math. Ann.}, 274(1):95--123, 1986.

\bibitem{MR4223331}
Satoshi Murai, Takahiro Nagaoka, and Akiko Yazawa.
\newblock Strictness of the log-concavity of generating polynomials of
  matroids.
\newblock {\em J. Combin. Theory Ser. A}, 181:Paper No. 105351, 22, 2021.

\bibitem{MR4138666}
Takahiro Nagaoka and Akiko Yazawa.
\newblock Strict log-concavity of the {K}irchhoff polynomial and its
  applications.
\newblock {\em S\'{e}m. Lothar. Combin.}, 84B:Art. 38, 12, 2020.

\bibitem{MR4234203}
Takahiro Nagaoka and Akiko Yazawa.
\newblock Strict log-concavity of the {K}irchhoff polynomial and its
  applications to the strong {L}efschetz property.
\newblock {\em J. Algebra}, 577:175--202, 2021.

\bibitem{MR2412335}
Hubert Rubenthaler.
\newblock {\em Alg\`ebres de {L}ie et espaces pr\'{e}homog\`enes}, volume~44 of
  {\em Travaux en Cours [Works in Progress]}.
\newblock Hermann \'{E}diteurs des Sciences et des Arts, Paris, 1992.
\newblock With a foreword by Jean-Michel Lemaire.

\bibitem{MR910208}
Hubert Rubenthaler and G\'{e}rard Schiffmann.
\newblock Op\'{e}rateurs diff\'{e}rentiels de {S}himura et espaces
  pr\'{e}homog\`enes.
\newblock {\em Invent. Math.}, 90(2):409--442, 1987.

\bibitem{MR430336}
M.~Sato and T.~Kimura.
\newblock A classification of irreducible prehomogeneous vector spaces and
  their relative invariants.
\newblock {\em Nagoya Math. J.}, 65:1--155, 1977.

\bibitem{MR259164}
Wilfried Schmid.
\newblock Die {R}andwerte holomorpher {F}unktionen auf hermitesch symmetrischen
  {R}\"{a}umen.
\newblock {\em Invent. Math.}, 9:61--80, 1969/70.

\bibitem{MR1679583}
Akihito Wachi.
\newblock Contravariant forms on generalized {V}erma modules and
  {$b$}-functions.
\newblock {\em Hiroshima Math. J.}, 29(1):193--225, 1999.

\end{thebibliography}

\end{document}